\newtheorem{thrm}{Theorem}[section]
\newtheorem{prop}[thrm]{Proposition}
\newtheorem{cor}[thrm]{Corollary}
\theoremstyle{definition}
\newtheorem{definition}[thrm]{Definition}
\newtheorem{remark}[thrm]{Remark}
\newtheorem{example}[thrm]{Example}
\numberwithin{equation}{section}
\newcommand{\Ass}{\operatorname{Ass}}
\newcommand{\fgrade}{\operatorname{fgrade}}
\newcommand{\ara}{\operatorname{ara}}
\newcommand{\cd}{\operatorname{cd}}
\newcommand{\Ht}{\operatorname{ht}}
\newcommand{\pd}{\operatorname{pd}}
\newcommand{\Ext}{\operatorname{Ext}}
\newcommand{\dg}{\operatorname{dg}}
\newcommand{\Rad}{\operatorname{Rad}}
\newcommand{\depth}{\operatorname{depth}}
\newcommand{\vpl}{\operatornamewithlimits{\varprojlim}}
\newcommand{\vil}{\operatornamewithlimits{\varinjlim}}
\newcommand{\fm}{\frak{m}}
\newcommand{\fp}{\frak{p}}
\newcommand{\fa}{\frak{a}}
\author{Majid Eghbali}
\address{School of Mathematics, Institute for Research in Fundamental Sciences (IPM), P. O. Box: 19395-5746, Tehran-Iran.}
\email{m.eghbali@yahoo.com}
\thanks{This research was in part supported by a grant from IPM (No. 91130026)}
\keywords{Set-theoretically and cohomologically complete
intersection ideals, Analytic spread, Monomials, Formal grade, Depth
of powers of ideals.}
\subjclass[2000]{13D45, 13C14.}
\begin{document}

\title[cohomologically complete intersection ideals]{On set theoretically and cohomologically complete intersection ideals}

\begin{abstract}
Let $(R,\fm)$ be a local ring and $\fa$ be an ideal of $R$. The  inequalities
     $$\begin{array}{ll} \   \Ht(\fa) \leq \cd(\fa,R) \leq \ara(\fa) \leq l(\fa) \leq \mu(\fa)
\end{array}$$
are known. It is an interesting and long-standing problem to find
out the cases giving equality. Thanks to the formal grade we give
conditions in which the above inequalities become
equalities.\end{abstract} \maketitle

\section{Introduction} \label{sect1}
Throughout this note, $R$ is a commutative Noetherian ring with identity and $\fa$ is an ideal of $R$.
 The smallest number  of  elements of $R$ required to generate $\fa$ up to radical is called the arithmetic rank, $\ara(\fa)$ of $\fa$.
Another invariant related to the ideal $\fa$ is $\cd(\fa,R)$, the so-called cohomological dimension of $\fa$, defined as the
 maximum index for which the local cohomology module $H^i_{\fa} (R)$ does not vanish.

It is well known that $\Ht(\fa) \leq \cd(\fa,R) \leq \ara(\fa).$ If
$\Ht(\fa) = \ara(\fa)$, $\fa$ is called a set-theoretic complete
intersection ideal. Determining set-theoretic complete intersection
ideals is a classical and long-standing problem in commutative
 algebra and algebraic geometry.
Many questions related to an ideal $\fa$ to being set-theoretic complete intersection are still open. See \cite{Lyu} for more information.

Recently, there have been many attempts to investigate the equality $\cd(\fa,R)=\ara(\fa)$, see  e.g. \cite{Ba1},
\cite{Ba2}, \cite{Ba3}, \cite{K-T-Y}  and their references, for certain classes of squarefree monomial ideals, but the
 equality does not hold in general (cf. \cite{Ya}). However, in many cases, this question is  open and many researchers are still working on it.

 Hellus and Schenzel \cite{H-Sch} defined an ideal $\fa$ to be cohomologically complete intersection, where  $\Ht(\fa) = \cd(\fa,R)$. In
 case $(R,\fm)$ is a Gorenstein local ring, they gave a characterization of cohomologically complete intersections for a certain class of ideals.

One more concept we will use is analytic spread of ideals. Let  $(R,\fm)$  be a local ring. We denote by $l(\fa)$, the Krull
 dimension of $\oplus^{\infty}_{n=0} (\fa^n/\fa^{n} \fm)$  called the analytic spread of $\fa$. In general,
$$\begin{array}{ll} \   \Ht(\fa) \leq \cd(\fa,R) \leq \ara(\fa) \leq l(\fa) \leq \mu(\fa),\ \ \ \ \ \ (\ast)
\end{array}$$
where $\mu(\fa)$ is the  minimal number of generators of $\fa$.
Burch in  \cite{Bu} proved what is now known as Burch's inequality
that $l(\fa) \leq \dim R- (\min_n \depth R/\fa^n)$. It should be
noted that the stability of $\depth R/\fa^n$ has been established by
Brodmann (cf. \cite{Br}). The equality $l(\fa)  = \dim R - (\min
\depth R/\fa^n)$
 has been  studied from several points of view by many authors and deep results have been obtained in recent
 years by   the assumptions that associated graded ring of $\fa$ being Cohen-Macaulay, see for instance
\cite[Proposition 3.3]{Ei-Hu} or \cite[Proposition 5.1]{Tr-Ik}  for detailed information.

The outline of this paper is as follows:

In Section two, we give a slight generalization of a result of Cowsik and Nori in order to turn some of the inequalities in  (*)
into the equality, (cf. Theorem \ref{2.8} ).
According to the results given in Section 2, one can see that the equality of $\Ht (\fa)=\cd(\fa,R)$ has a critical role in
clarifying the structure   of  $\fa$. In Sections  three and four  we have tried to focus our attention on this equality.

\section{Formal grade and depth} \label{ns}

Throughout this Section, $(R,\fm)$ is a commutative Noetherian local ring. Let $\fa$ be an ideal of $R$ and $M$ be an $R$-module. For an
 integer $i$, let $H^{i}_{\fa}(M) \ $ denote the $i$th local cohomology module of $M$. We have the isomorphism of $H^{i}_{\fa}(M) \ $  to
    ${\vil}_n \Ext^{i}_{R}(R/\fa^n, M)$ for every $i \in \mathbb {Z}$, see \cite{Br-Sh} for more details.

Consider the family of local cohomology modules $\{
H^{i}_{\fm}(M/{\fa}^n M) \}_{n \in \mathbb{N}} \ $. For every $n$
there is a natural homomorphism $H^{i}_{\fm}(M/{\fa}^{n+1} M)
\rightarrow H^{i}_{\fm}(M/{\fa}^n M)$ such that the family forms a
projective system. The projective limit
$\mathfrak{F}^i_{\fa}(M):={\vpl}_nH^i_{\fm}(M/\fa^n M)$ is called
the $i$th formal local cohomology of $M$ with respect to $\fa$ (cf.
\cite{Sch} also see \cite{As-D} and \cite{E} for more information).

For an ideal $\fa$ of $R$  the formal grade, $\fgrade(\fa,M)$, is defined as minimal non-vanishing of the formal cohomology modules, i.e.
$$\begin{array}{ll} \     \fgrade (\fa, M) = \inf \{i \in \mathbb {Z}:  {\vpl}_nH^i_{\fm}(M/\fa^n M) \neq 0 \}.
\end{array}$$

Formal grade is playing an important role throughout this note. We first recall a few remarks.

\begin{remark}\label{2.1} Let $\fa$ denote an ideal of  a local ring $(R,\fm)$. Let $M$ be a finitely generated $R$-module.
\begin{enumerate}
\item[(1)] $ \fgrade(\fa,M) \leq  \dim \widehat{R}/(\fa \widehat{R},\fp) , \text {\ for\  all \ } \fp \in \Ass \widehat{M}$
 (cf. \cite[Theorem 4.12]{Sch}).
\item[(2)] In case $M$ is a Cohen-Macaulay module $\fgrade(\fa,M)= \dim M - \cd (\fa,M)$ (cf. \cite[Corolary 4.2]{As-D}).
\end{enumerate}
\end{remark}

A key point in the proof of the main results in this section is the following:

\begin{prop} \label{2.2} Let $\fa$ be an  ideal of a $d$-dimensional local ring $(R,\fm)$. Then, the  inequality
$$\begin{array}{ll} \    \min_n \depth R/\fa^n  \leq \fgrade (\fa, R).
\end{array}$$
holds.
\end{prop}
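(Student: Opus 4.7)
The inequality should follow almost immediately by unwinding the two definitions and comparing them term by term. The plan is: let $t := \min_n \depth(R/\fa^n)$, and show $\mathfrak{F}^i_{\fa}(R) = 0$ for every $i < t$, which gives $\fgrade(\fa,R) \geq t$ by definition. First I would invoke Brodmann's stability result (already cited in the introduction) to legitimize writing $\min_n$ rather than $\inf_n$; alternatively, one can argue with the infimum and the conclusion is unchanged.

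The heart of the argument is the trivial observation that, for every fixed $n$, the depth of $R/\fa^n$ is the least integer $i$ with $H^i_{\fm}(R/\fa^n) \neq 0$. Hence $H^i_{\fm}(R/\fa^n) = 0$ whenever $i < \depth(R/\fa^n)$. Since $t \leq \depth(R/\fa^n)$ for all $n$, we conclude that $H^i_{\fm}(R/\fa^n) = 0$ for every $i < t$ and every $n \in \mathbb{N}$.

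Fixing such an $i < t$, the projective system $\{H^i_{\fm}(R/\fa^n)\}_n$ whose limit defines $\mathfrak{F}^i_{\fa}(R)$ is identically zero, and so its inverse limit $\mathfrak{F}^i_{\fa}(R) = \vpl_n H^i_{\fm}(R/\fa^n)$ vanishes. By the definition of formal grade as the infimum of indices where $\mathfrak{F}^{\bullet}_{\fa}(R)$ is nonzero, this yields $\fgrade(\fa,R) \geq t = \min_n \depth(R/\fa^n)$, which is exactly the assertion.

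There is no real obstacle here; the statement is essentially a tautology once one recognizes that $\fgrade$ is computed from the same modules whose vanishing is governed by $\depth(R/\fa^n)$. The only subtle point worth mentioning is that the inequality could in principle be strict, since the transition maps of the projective system could force further vanishing in the limit, but for the direction required in the proposition no such analysis is needed.
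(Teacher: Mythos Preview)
Your proof is correct and follows exactly the same line as the paper's: set $t=\min_n \depth R/\fa^n$, observe that $H^i_{\fm}(R/\fa^n)=0$ for all $i<t$ and all $n$, conclude that the inverse limit $\vpl_n H^i_{\fm}(R/\fa^n)$ vanishes for $i<t$, and invoke the definition of $\fgrade$. Your version is simply more detailed, with the added (and apt) remark that strictness can arise from the transition maps, which the paper illustrates in the example immediately following the proposition.
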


\begin{proof}
Put $\min_n \depth R/\fa^n:=t$,  then for each integer $n$, $H^{i}_{\fm }(R/\fa^n )=0$ for all $i<t$. It implies that ${\vpl} H^i_{\fm}(R/\fa^n )=0$
for all $i<t$. Then,  the definition of the formal grade implies that $\min_n \depth R/\fa^n  \leq \fgrade (\fa, R)$.
\end{proof}

The above inequality may be strict as the next example demonstrates.

\begin{example}\label{2.3} Let $k$ be a field and $R=k[\left|x,y,z\right|]$ denote the formal power series ring in three variables over $k$.
Put $\fa := (x, y) \cap (y, z) \cap (x, z)$. Easily one can see that $\depth R/\fa=1$. However,
$\fa^2 = (x, y)^2 \cap (y, z)^2 \cap (x, z)^2 \cap (x^2, y^2, z^2)$ and consequently $\depth R/\fa^2=0$.

On the other hand, ${\vpl}_nH^0_{\fm}(R/\fa^n)=0$  then we have  $\ \fgrade (\fa, R)=1$.

\end{example}

In connection with the above results we state the next definition.

\begin{definition}\label{2.4} Let $\fa$ be an  ideal of a  local ring $(R,\fm)$. We define a non-negative integer $\dg(\fa)$ to measure
 the distance between the $\fgrade (\fa, R)$ and the lower bound of $\depth R/\fa^n, \ n \in \mathbb {N}$, i.e.
$$\begin{array}{ll} \   \dg(\fa):= \fgrade (\fa, R)- \min_n \depth R/\fa^n.
\end{array}$$
It should be noted that the stability of $\depth R/\fa^n$ has been
established by Brodmann (cf. \cite{Br}).
\end{definition}

Inspired of Remark \ref{2.1}, $\fgrade(\fa,R) \leq \dim (\widehat{R}/\fa \widehat{R}+\fp)$ for all $\fp \in \Ass \widehat{R}$. It
can be a suitable upper bound to control the formal grade of $\fa$ and $\min_n \depth R/\fa^n$ as well. It is clear that in case
 $\Rad(\fa \widehat{R}+\fp)=\fm \widehat{R}$ for some $\fp \in \Ass \widehat{R}$, then $\min_n \depth R/\fa^n  = \fgrade (\fa, R)=0$
  and consequently $\dg(\fa)=0$.

\begin{example}\label{2.5} Let $R = k[[x,y,z]]/(xy,xz)$ and $\fa:=(x,y)$. One can see that $\fgrade(\fa,R)=0$ and consequently $\dg(\fa)=0$.
\end{example}

\begin{prop}\label{2.6} Let $\fa$ be an  ideal of a  Cohen-Macaulay local ring $(R,\fm)$. Then, the following statements are true:
\begin{enumerate}
\item[(1)] If $\dg(\fa)=0$, then the following are equivalent:
\begin{enumerate}
\item[(a)] $\Ht(\fa)= \cd(\fa,R)$.
\item[(b)] $\fa$ \ is\ a\ set-theoretic\ complete\ intersection\ ideal.
\end{enumerate}
\item[(2)] Suppose that $\dg(\fa)= 1$, then
 $$\begin{array}{ll} \  l(\fa) \neq \dim R- \min_n \depth R/\fa^n \text{\ \ \  if\  and\  only\  if\ \ \  } \cd(\fa,R)= \ara (\fa)= l(\fa).
\end{array}$$

\end{enumerate}
\end{prop}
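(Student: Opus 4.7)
My plan is to reduce both parts to a single identity obtained by combining the Cohen--Macaulay hypothesis with Burch's inequality and the definition of $\dg(\fa)$. Since $R$ is Cohen--Macaulay, Remark \ref{2.1}(2) gives
$$\fgrade(\fa,R) = \dim R - \cd(\fa,R),$$
and by definition of $\dg(\fa)$ we have $\min_n \depth R/\fa^n = \fgrade(\fa,R) - \dg(\fa)$. Plugging these into Burch's inequality $l(\fa)\le \dim R - \min_n \depth R/\fa^n$ yields the master estimate
$$l(\fa) \;\le\; \cd(\fa,R) + \dg(\fa). \qquad (\dagger)$$
With $(\dagger)$ in hand, everything becomes a bookkeeping exercise against the chain $(\ast)$, i.e.\ $\Ht(\fa)\le\cd(\fa,R)\le \ara(\fa)\le l(\fa)$.

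For part (1), the assumption $\dg(\fa)=0$ turns $(\dagger)$ into $l(\fa)\le\cd(\fa,R)$, which by $(\ast)$ forces
$$\cd(\fa,R) = \ara(\fa) = l(\fa).$$
In particular $\cd(\fa,R)=\ara(\fa)$ holds unconditionally under $\dg(\fa)=0$, so $\Ht(\fa)=\cd(\fa,R)$ is equivalent to $\Ht(\fa)=\ara(\fa)$, which is exactly the set-theoretic complete intersection property. The direction (b) $\Rightarrow$ (a) is immediate from $(\ast)$ and does not even use $\dg(\fa)=0$.

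For part (2), the assumption $\dg(\fa)=1$ turns $(\dagger)$ into $l(\fa)\le \cd(\fa,R)+1$, while $\dim R - \min_n\depth R/\fa^n = \cd(\fa,R)+1$. If $l(\fa) \neq \dim R - \min_n\depth R/\fa^n$, then $l(\fa)\le \cd(\fa,R)$, and chaining against $(\ast)$ again collapses everything to $\cd(\fa,R)=\ara(\fa)=l(\fa)$. Conversely, if this triple equality holds then $l(\fa) = \cd(\fa,R) < \cd(\fa,R)+1 = \dim R - \min_n\depth R/\fa^n$, so the two quantities are distinct.

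The only non-trivial ingredient is recognising that the Cohen--Macaulay hypothesis combined with Remark \ref{2.1}(2) converts Burch's inequality into a statement about $\cd(\fa,R)$; once $(\dagger)$ is written down, both parts are squeezed out of the chain $(\ast)$ by a short comparison. I therefore do not anticipate a genuine obstacle, only the need to be careful that $\dg(\fa)$, defined via $\fgrade$ and the eventual value of $\depth R/\fa^n$, interacts correctly with Burch's bound on the analytic spread.
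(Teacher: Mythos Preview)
Your proof is correct and follows essentially the same approach as the paper: both arguments combine Burch's inequality, Remark~\ref{2.1}(2), and the definition of $\dg(\fa)$ to bound $l(\fa)$ against $\cd(\fa,R)$, then squeeze using the chain $(\ast)$. Your packaging via the single master estimate $(\dagger)$ is slightly cleaner than the paper's inline computations for each part, but the mathematical content is identical.
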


\begin{proof} \begin{enumerate}
\item[(1)] In case $\dg(\fa)=0$, then the inequalities
$$\begin{array}{ll} \   \Ht(\fa) \leq \cd(\fa,R) \leq l(\fa) \leq \dim R -\fgrade(\fa,R)
\end{array}$$
hold. Moreover, if $R$ is a Cohen-Macaulay ring, then in the light of Remark \ref{2.1} (2) and $(\ast)$ one has
$$\begin{array}{ll} \   \Ht(\fa) \leq \cd(\fa,R) & \leq l(\fa)
\\& \leq \dim R - \min_n \depth R/\fa^n
\\& =\dim R -\fgrade(\fa,R)
\\& = \cd(\fa,R) .
\end{array}$$
By virtue of $(\ast)$ and in conjunction with the above equalities, the statements "a" and "b" are equivalent.

\item[(2)] Assume that $l(\fa) \neq \dim R- \min_n \depth R/\fa^n$, then by assumptions we have
$$\begin{array}{ll} \  \cd (\fa,R) \leq \ara(\fa) \leq l(\fa) & <\dim R- \min \depth R/\fa^n
\\& = \dim R- \fgrade(\fa,R)+1
\\& = \cd(\fa,R)+1.
\end{array}$$
Now the claim is clear.

For the reverse implication, assume that $l(\fa) =\dim R- \min \depth R/\fa^n$.  If this is the case,
then $l(\fa) =\dim R- \fgrade (\fa,R)+1= \cd (\fa,R)+1$, which is a contradiction.
\end{enumerate}
\end{proof}

\begin{example}\label{2.7} Let $R = k[[x_1,x_2,x_3,x_4]]$ be the formal power series ring over a  field $k$ in four variables
 and $\fa= (x_1,x_2)\cap (x_3,x_4)$. Clearly one can see that $\dim R/\fa=2$, $\fgrade (\fa,R)=1$ and by virtue of
  \cite[Lemma 2]{Sch-V} $\min_n \depth R/\fa^n=1$, i.e. $\dg (\fa)=0$. On the other hand $\Ht (\fa)=2$ and $\cd (\fa,R)=3$. By
   a Mayer-Vietoris sequence one can see that $H^3_{\fa}(R) \neq 0$, that is  $\ara(\fa) =3= l(\fa)$.
\end{example}

For a prime ideal $\fp$  of  $R$,  the $n$th symbolic power of $\fp$
is denoted by $\fp^{(n)}= \fp^n R_{\fp} \cap R$. The
 following Theorem, gives  conditions at which the  required  equality of $(\ast)$ (in the Introduction)  is provided.

\begin{thrm} \label{2.8} Let $\fp$ be a  prime ideal of a Cohen-Macaulay local ring $(R,\fm)$ with  $\fgrade(\fp,R)\leq 1$. The
following statements are true:
\begin{enumerate}
\item[(1)] If $\fp^{(n)}= \fp^n$, for all $n$, then $l(\fp) =\cd(\fp,R) = \dim R - 1$.
\item[(2)] If $l(\fp)  = \dim R - 1$ and $\Ht (\fp)=\cd(\fp,R)$ then, $\fp$ is a set-theoretic complete intersection.
\end{enumerate}
\end{thrm}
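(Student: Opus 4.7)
The common engine is Remark~\ref{2.1}(2): since $R$ is Cohen-Macaulay,
\[
    \fgrade(\fp,R) = \dim R - \cd(\fp,R),
\]
so the hypothesis $\fgrade(\fp,R)\le 1$ is equivalent to the lower bound $\cd(\fp,R)\ge \dim R-1$. My plan is to combine this with the chain $(\ast)$ and Burch's inequality so as to pinch all of $\cd(\fp,R)$, $\ara(\fp)$, and $l(\fp)$ between $\dim R-1$ and itself.

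For part~(1), I would start from the standard fact that the condition $\fp^{(n)}=\fp^n$ for all $n$ forces each $\fp^n$ to be $\fp$-primary, whence $\Ass(R/\fp^n)=\{\fp\}$. Excluding the degenerate case $\fp=\fm$ (which under $\fgrade(\fp,R)\le 1$ only survives in dimension at most $1$, where the claim is vacuous or trivial), every element of $\fm\setminus\fp$ is a non-zerodivisor on $R/\fp^n$, so $\depth R/\fp^n\ge 1$ for every $n$. Burch's inequality $l(\fp) \le \dim R - \min_n\depth R/\fp^n$ then yields $l(\fp)\le \dim R-1$, and together with $(\ast)$ and the lower bound above this gives
\[
    \dim R-1 \le \cd(\fp,R) \le l(\fp) \le \dim R-1,
\]
so $l(\fp)=\cd(\fp,R)=\dim R-1$.

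For part~(2), the hypothesis $l(\fp)=\dim R-1<\dim R=l(\fm)$ immediately rules out $\fp=\fm$, whence $\Ht(\fp)\le \dim R-1$. Combined with $\Ht(\fp)=\cd(\fp,R)\ge \dim R-1$, this forces $\Ht(\fp)=\dim R-1=l(\fp)$, and the chain $(\ast)$ then pinches
\[
    \Ht(\fp) \le \ara(\fp) \le l(\fp) = \Ht(\fp),
\]
giving $\ara(\fp)=\Ht(\fp)$, i.e.\ $\fp$ is a set-theoretic complete intersection.

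The only place where I expect real care to be needed is the bookkeeping around the excluded case $\fp=\fm$ in~(1) and the observation $l(\fm)=\dim R$ used to rule it out in~(2); once those are in place, $(\ast)$ together with Remark~\ref{2.1}(2) does all the remaining arithmetic at once, with the hypothesis $\fp^{(n)}=\fp^n$ entering only to supply the depth estimate that feeds Burch's inequality.
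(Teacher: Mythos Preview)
Your argument is essentially the paper's: both parts convert $\fgrade(\fp,R)\le 1$ into $\cd(\fp,R)\ge\dim R-1$ via Remark~\ref{2.1}(2), use $\fp^{(n)}=\fp^n$ to obtain $\depth R/\fp^n\ge 1$ and hence $l(\fp)\le\dim R-1$ by Burch, and then pinch through the chain $(\ast)$; the paper additionally invokes Proposition~\ref{2.2} to record $\fgrade(\fp,R)=1$ exactly, but this is cosmetic. One small wobble: your parenthetical justification for excluding $\fp=\fm$ is off, since $\fgrade(\fm,R)=0\le 1$ in every dimension, so the hypothesis does not restrict that case---it must simply be excluded outright (the paper's proof tacitly does the same, as the step ``$\depth R/\fp^n>0$'' fails for $\fp=\fm$).
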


\begin{proof} \begin{enumerate}
\item[(1)] As $\fp^{(n)}= \fp^n$, for all $n$, hence all of prime divisors of $\fp^n$ are minimal for all $n$, that is $\depth R/\fp^n >0$. On the
 other hand, Proposition \ref {2.2} implies that $\min \depth R/\fp^n= \fgrade (\fp,R)=1$, so it follows the claim. To this end note that
$$\begin{array}{ll} \  \cd (\fp,R) \leq l(\fp) & \leq \dim R-1
\\& = \dim R- \fgrade (\fp,R)
\\& = \cd(\fp,R).
\end{array}$$

\item[(2)] As  $\fgrade (\fp,R) \leq 1$ and $R$ is a Cohen-Macaulay local ring, then we have $\cd (\fp,R)=\dim R- \fgrade (\fp,R) \geq \dim R-1$.
 Hence,
$$\begin{array}{ll} \  \dim R -1 \leq \cd (\fp,R) =\Ht (\fp) \leq l(\fp) = \dim R-1.
\end{array}$$
It follows that $\fp$ is a set-theoretic complete intersection ideal.
\end{enumerate}
\end{proof}

Since $\fgrade (\fp,R) \leq \dim R/\fp$, one can get the following Corollary of Theorem \ref{2.8}.

\begin{cor} \label{2.9} Let $\fp$ be a one-dimensional  prime ideal of a Cohen-Macaulay local ring $(R,\fm)$. Then, $(1)$ implies $(2)$ and $(2)$
implies $(3)$.
\begin{enumerate}
\item[(1)] $\fp^{(n)}= \fp^n$, for all $n$.
\item[(2)] $l(\fp)  = \dim R - 1$.
\item[(3)]  $\fp$ is a set-theoretic complete intersection.
\end{enumerate}
\end{cor}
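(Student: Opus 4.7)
The plan is to derive both implications directly from Theorem \ref{2.8}, after noting that the one-dimensional hypothesis on $\fp$ automatically supplies the bound on $\fgrade$ required to apply that theorem.

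First I would record the standing reduction: since $\dim R/\fp = 1$, the inequality $\fgrade(\fp,R) \leq \dim R/\fp$ recalled right before the statement gives $\fgrade(\fp,R) \leq 1$. Thus the hypotheses of Theorem \ref{2.8} are in force throughout.

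For $(1) \Rightarrow (2)$, I would simply invoke Theorem \ref{2.8}(1): the assumption $\fp^{(n)} = \fp^n$ for all $n$ yields $l(\fp) = \cd(\fp,R) = \dim R - 1$, and in particular $l(\fp) = \dim R - 1$.

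For $(2) \Rightarrow (3)$, the key observation is that the Cohen--Macaulay hypothesis together with $\dim R/\fp = 1$ forces $\Ht(\fp) = \dim R - \dim R/\fp = \dim R - 1$. Combining this with (2) and the chain $(\ast)$,
\[
\dim R - 1 = \Ht(\fp) \leq \cd(\fp,R) \leq \ara(\fp) \leq l(\fp) = \dim R - 1,
\]
so every inequality is an equality; in particular $\Ht(\fp) = \ara(\fp)$, which is exactly the assertion that $\fp$ is a set-theoretic complete intersection. (Equivalently, one could observe that (2) together with $\Ht(\fp) = \dim R - 1 = \cd(\fp,R)$ places us in the situation of Theorem \ref{2.8}(2), which yields the same conclusion.)

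There is no real obstacle here: the corollary is a packaging statement that exploits the fact that a one-dimensional $\fp$ automatically has $\fgrade(\fp,R) \leq 1$, so the two parts of Theorem \ref{2.8} apply verbatim. The only small care needed is in $(2) \Rightarrow (3)$, where one must notice that Cohen--Macaulayness collapses the whole sequence $(\ast)$ once both endpoints agree.
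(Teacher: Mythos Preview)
Your proposal is correct and matches the paper's approach: the paper's entire proof is the one-line observation that $\fgrade(\fp,R)\leq\dim R/\fp=1$ puts you inside Theorem~\ref{2.8}, which is exactly what you do. Your direct argument for $(2)\Rightarrow(3)$ via the Cohen--Macaulay dimension formula $\Ht(\fp)=\dim R-\dim R/\fp$ and the chain $(\ast)$ is in fact a bit cleaner than routing through Theorem~\ref{2.8}(2), since it avoids having to separately verify the hypothesis $\Ht(\fp)=\cd(\fp,R)$ there.
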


It should be noted that  Cowsik and Nori \cite[Proposition 3]{C-N} with some extra assumptions have shown that the conditions in
Corollary \ref{2.9} are equivalent for $\fp$ generated by an $R$-sequence.

\section{Case one: the ring of positive characteristic}

Let $p$ be a prime number and $R$ a commutative Noetherian ring of
characteristic $p$. The Frobenius endomorphism of $R$ is the map
$\varphi: R \rightarrow R$ where $\varphi(r)=r^p$. Let $\fa = (x_1,
... , x_n)$ be an ideal of  $R$. $\fa^{[p^e]}$ is the $e$th
Frobenius powers of $\fa$, defined by
  $\fa^{[p^e]} = (x^{p^e}_1,...,x^{p^e}_n )R$. Then, $\fa^{np^e} \subseteq
\fa^{[p^e]} \subseteq \fa^{p^e}$, i.e. $\fa^{[p^e]}$ and $ \fa^{p^e}$ have the same radical (cf. \cite{Bru-Her}).

Peskine and Szpiro \cite[Chap. 3, Proposition 4.1]{P-S} proved that for a  regular local ring $R$ of characteristic $p > 0$ and
 an ideal  $\fa$ of  $R$, if   $R/\fa$ is a Cohen-Macaulay ring, then $\Ht (\fa)= \cd (\fa, R)$. Below, (see Proposition \ref{3.2})
  we  give a generalization of their result.

\begin{remark} \label{3.1} Let $(R,\fm)$ be a regular local ring of characteristic $p >0$. Then, the following inequality holds:
$$\begin{array}{ll} \   \depth R/\fa \leq \fgrade(\fa,R) \leq \dim R/\fa.
\end{array}$$
\end{remark}

\begin{proof}
It is known that $\fgrade (\fa,R) \leq \dim R/\fa$ (cf. Section $2$).  By what we have seen above,
depth of $R/\fa$ is the same as the depth of every iteration of it. Put $l:= \depth R/\fa= \depth R/\fa^{[p^e]}$
for each integer $e$. It induces that $H^i_{\fm}(R/\fa^{[p^e]})$ is zero for all $i<l$, then so is
  ${\vpl} H^{i}_{\fm}(R/\fa^{p^e})= {\vpl} H^{i}_{\fm}(R/\fa^{[p^e]})$ (cf. \cite[Lemma 3.8]{Sch}).
  Hence, $l \leq \fgrade(\fa,R)$. Therefore we get the desired inequality.
\end{proof}

Note that in case $R$ is a Cohen-Macaulay local ring (not necessarily of positive characteristic), then in the light
 of \ref{2.1}(2) the following statement holds:
$$\begin{array}{ll} \   \Ht (\fa)= \cd (\fa, R) \ \text{\ if \ and\ only\ if\ } \fgrade(\fa,R)=\dim R/\fa.
\end{array}$$

\begin{prop}\label{3.2} Let $(R,\fm)$ be a regular local ring of characteristic $p >0$. Then, the following statements are equivalent:
\begin{enumerate}
\item[(1)] $R/\fa$ is a Cohen-Macaulay ring.
\item[(2)]  $\Ht (\fa)= \cd (\fa, R)$ and $H^{s}_{\fm}(R/\fa^{[p^{e+1}]}) \rightarrow H^{s}_{\fm}(R/\fa^{[p^e]})$ is
epimorphism for each integer $e$, where $s:=\depth R/\fa$.
\end{enumerate}
\end{prop}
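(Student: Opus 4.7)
The plan is to exploit the cofinal Frobenius subsystem $\{\fa^{[p^e]}\}_{e \geq 0}$ of $\{\fa^n\}_n$ in computing the formal local cohomology, combined with Remark \ref{2.1}(2) and Remark \ref{3.1}.

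For $(1) \Rightarrow (2)$, I will assume $R/\fa$ is Cohen--Macaulay, so $s = \depth R/\fa = \dim R/\fa$. Since $R$ is regular, hence Cohen--Macaulay, Remark \ref{2.1}(2) gives $\fgrade(\fa,R) = \dim R - \cd(\fa,R)$, while Remark \ref{3.1} sandwiches $s \leq \fgrade(\fa,R) \leq \dim R/\fa = s$. Together with $\dim R/\fa = \dim R - \Ht(\fa)$ this forces $\cd(\fa,R) = \Ht(\fa)$. For the surjectivity, I note that $\dim R/\fa^{[p^e]} = \dim R/\fa = s$ because $\fa^{[p^e]}$ has the same radical as $\fa$. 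The kernel of the natural surjection $R/\fa^{[p^{e+1}]} \twoheadrightarrow R/\fa^{[p^e]}$ is an $R/\fa^{[p^e]}$-module, so has Krull dimension at most $s$, and $H^{s+1}_{\fm}$ of it vanishes; the long exact sequence of local cohomology then yields the desired epimorphism on $H^s_{\fm}$.

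For $(2) \Rightarrow (1)$, Remark \ref{2.1}(2) and the hypothesis $\Ht(\fa) = \cd(\fa,R)$ yield
\[
\fgrade(\fa,R) = \dim R - \cd(\fa,R) = \dim R - \Ht(\fa) = \dim R/\fa.
\]
By Remark \ref{3.1}, $s \leq \fgrade(\fa,R) = \dim R/\fa$, and the task reduces to showing equality, which is exactly the Cohen--Macaulay condition for $R/\fa$. Suppose instead that $s < \dim R/\fa$. Then $H^s_{\fm}(R/\fa) \neq 0$ by the defining property of depth. The assumption that every $H^s_{\fm}(R/\fa^{[p^{e+1}]}) \to H^s_{\fm}(R/\fa^{[p^e]})$ is surjective, combined with nonvanishing of the terminal term $H^s_{\fm}(R/\fa)$, forces every term of the inverse system to be nonzero and verifies the Mittag--Leffler property; hence the limit surjects onto $H^s_{\fm}(R/\fa)$ and is itself nonzero. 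Because $\{\fa^{[p^e]}\}$ is cofinal with $\{\fa^n\}$ and Schenzel's Lemma 3.8 (already invoked in Remark \ref{3.1}) identifies $\vpl_e H^s_{\fm}(R/\fa^{p^e})$ with $\vpl_e H^s_{\fm}(R/\fa^{[p^e]})$, I conclude $\mathfrak{F}^s_{\fa}(R) \neq 0$, whence $\fgrade(\fa,R) \leq s < \dim R/\fa$, a contradiction.

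The principal technical point is the identification $\mathfrak{F}^s_{\fa}(R) = \vpl_e H^s_{\fm}(R/\fa^{[p^e]})$, which is what permits the surjectivity hypothesis in (2) to control the formal grade via a simple Mittag--Leffler argument; the converse epimorphism in $(1) \Rightarrow (2)$ is then a routine dimension count on the kernel of the quotient map between consecutive Frobenius powers.
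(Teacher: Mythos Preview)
Your argument is correct and follows essentially the same route as the paper's: both directions use the cofinal Frobenius subsystem together with Remarks~\ref{2.1}(2) and \ref{3.1}, and the paper invokes \cite[Lemma~3.5.3]{We} for precisely your Mittag--Leffler step. One small correction: the kernel $\fa^{[p^e]}/\fa^{[p^{e+1}]}$ is not in general an $R/\fa^{[p^e]}$-module (since $(\fa^{[p^e]})^2 \not\subseteq \fa^{[p^{e+1}]}$), but it \emph{is} a submodule of $R/\fa^{[p^{e+1}]}$, whose dimension equals $s$ because $\fa^{[p^{e+1}]}$ and $\fa$ have the same radical, so your dimension bound and the resulting surjectivity on $H^s_{\fm}$ still hold.
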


\begin{proof}
$(1) \Rightarrow (2)$ As $R/\fa$ is a Cohen-Macaulay ring, then by assumption every iteration of $R/\fa$ is again  a Cohen-Macaulay
ring. Hence, $H^i_{\fm}(R/\fa^{[p^e]})$ is zero for all $i < \dim R/\fa$, then so is
${\vpl} H^i_{\fm}(R/\fa^{[p^e]}) \cong {\vpl} H^i_{\fm}(R/\fa^{p^e})$  for all  $i< \dim R/\fa$ (cf. \cite[Lemma 3.8]{Sch}).

 By virtue of \cite[Remark 3.6]{Sch}, one can see that  $H^{\dim R-i}_{\fa}(R)=0$ for all $\dim R-i >\Ht(\fa)$, i.e. $\Ht (\fa)= \cd (\fa, R)$.
 The second part of the claim follows by Hartshorne's non-vanishing Theorem, since $\depth R/\fa=\dim R/\fa$.

$(2) \Rightarrow (1)$ Assume that  $\Ht (\fa)= \cd (\fa, R)$. Then, $ \fgrade(\fa,R)= \dim R/\fa$. If we can prove that
 $\depth R/\fa \geq \fgrade(\fa,R)$, we would be done. Consider the epimorphism  of non-zero $R$-modules for each $e$:
$$\begin{array}{ll} \   H^s_{\fm}(R/\fa^{[p^{e+1}]}) \rightarrow H^s_{\fm}(R/\fa^{[p^{e}]}) \rightarrow 0.
\end{array}$$
Hence, \cite[Lemma 3.5.3]{We} implies that $\fgrade(\fa,R) \leq \depth R/\fa$. This completes the proof.
\end{proof}

\section{Case two: The polynomial ring}

Throughout this section, assume that $R=k[x_{1} ,..., x_{n}]$ is a polynomial ring in $n$ variables $x_{1} ,..., x_{n}$ over a field
$k$. Let $S:=k[x_{1} ,..., x_{n}]_{(x_{1} ,..., x_{n})}$ be the local ring  and $I$ be a square free monomial ideal of $S$:

\begin{prop}\label{4.1} Let $S$ and $I$ be as above. Then, the following are true.
\begin{enumerate}
\item[(1)]
$$\begin{array}{ll} \ H^{i}_{I }(S) = 0 \   \Longleftrightarrow \ {\vpl}_t H^{n-i}_{\fm}(S/I^t)=0\  \Longleftrightarrow \ H^{n-i}_{\fm }(S/I) = 0,
\end{array}$$
for a given integer $i$. In particular $S/I$ is a Cohen-Macaulay ring if and only if ${\vpl}_t H^{j}_{\fm}(S/I^t)=0 \ $ for all $j < n- \Ht I$.
\item[(2)] If $\Ht(\fa)=\cd(\fa,R)$ then,  $R/\fa$ is a Cohen-Macaulay ring, provided that $\fa$ is a squarefree monomial ideal of $R$.
\end{enumerate}
\end{prop}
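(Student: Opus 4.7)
My plan is to split the three-way equivalence in (1) into two links---one purely Gorenstein-theoretic and one requiring the squarefree-monomial structure of $I$---and then obtain the ``in particular'' statement in (1) and assertion (2) as direct corollaries. For the first link $H^{i}_{I}(S) = 0 \Leftrightarrow \varprojlim_{t} H^{n-i}_{\fm}(S/I^{t}) = 0$, I would invoke Schenzel's Matlis duality for formal local cohomology over a Gorenstein ring. Since $S$ is regular of dimension $n$, hence Gorenstein, one has a canonical isomorphism
\begin{equation*}
\varprojlim_{t} H^{n-i}_{\fm}(S/I^{t}) \;\cong\; \Hom_{S}\bigl(H^{i}_{I}(S),\, E_{S}(k)\bigr),
\end{equation*}
and Matlis duality then forces the two sides to vanish simultaneously. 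This step needs nothing about $I$ being squarefree monomial; it also underlies the Cohen--Macaulay identity $\fgrade(I,S) = n - \cd(I,S)$ recorded in Remark~\ref{2.1}(2).

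For the second link $\varprojlim_{t} H^{n-i}_{\fm}(S/I^{t}) = 0 \Leftrightarrow H^{n-i}_{\fm}(S/I) = 0$, the squarefree hypothesis is essential, since Example~\ref{2.3} already shows that individual terms $H^{j}_{\fm}(S/I^{t})$ need not track the vanishing of $H^{j}_{\fm}(S/I)$. My approach is to chain two further identifications. First, local duality on the Gorenstein ring $S$ gives $H^{n-i}_{\fm}(S/I) \cong \Hom_{S}(\Ext^{i}_{S}(S/I, S), E_{S}(k))$. Second, for squarefree monomial ideals the sets $\{ i : H^{i}_{I}(S) \neq 0 \}$ and $\{ i : \Ext^{i}_{S}(S/I, S) \neq 0 \}$ coincide degree by degree, a fact that follows from the Hochster-type multigraded formulas for both sides paired via Alexander duality for Stanley--Reisner rings. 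Chaining the three identifications (Schenzel, squarefree-monomial duality, local duality) together with Matlis duality yields $H^{i}_{I}(S) = 0 \Leftrightarrow \Ext^{i}_{S}(S/I, S) = 0 \Leftrightarrow H^{n-i}_{\fm}(S/I) = 0$, completing the three-way equivalence. The ``in particular'' statement in (1) then follows on letting $i$ range over $i > \Ht(I)$, equivalently $j := n - i < \dim S/I$.

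Given (1), assertion (2) is direct: the hypothesis $\Ht(I) = \cd(I, R)$ forces $H^{i}_{I}(S) = 0$ for every $i > \Ht(I)$, so (1) gives $H^{j}_{\fm}(S/I) = 0$ for every $j < \dim S/I$, whence $\depth S/I = \dim S/I$; thus $S/I$, and consequently $R/I$, is Cohen--Macaulay. I expect the main obstacle to be the squarefree-monomial correspondence $\{ i : H^{i}_{I}(S) \neq 0 \} = \{ i : \Ext^{i}_{S}(S/I, S) \neq 0 \}$: it fails for general ideals and cannot be obtained by purely formal homological manipulations, so one must extract it from the combinatorics of the associated simplicial complex.
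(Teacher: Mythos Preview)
Your argument is correct and follows the same two-step skeleton as the paper. For the first equivalence both you and the paper invoke Schenzel's duality $\varprojlim_t H^{n-i}_{\fm}(S/I^t)\cong \Hom_S(H^i_I(S),E)$ over the Gorenstein ring $S$ (the paper cites \cite[Remark~3.6]{Sch}); for the second, both reduce to the squarefree-monomial fact that $H^i_I(S)=0$ iff $H^{n-i}_{\fm}(S/I)=0$. The only difference is that the paper dispatches this last step by a bare citation of Singh--Walther \cite[Corollary~4.2]{S-W}, whereas you unpack it via local duality together with the coincidence of nonvanishing indices of $H^i_I(S)$ and $\Ext^i_S(S/I,S)$ coming from Hochster-type formulas and Alexander duality---which is essentially the mechanism behind the cited result (and Musta\c{t}\u{a}'s earlier version of it). The ``in particular'' clause and part~(2) are handled identically in both, by localizing at the graded maximal ideal and reading off the depth from part~(1).
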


\begin{proof}
\begin{enumerate}
\item[(1)]
By virtue of \cite[Remark 3.6]{Sch} $H^{i}_{I }(S) = 0 \ $  if and only if ${\vpl}_t H^{n-i}_{\fm}(S/I^t)=0 \ $. On the other hand by
virtue of \cite[Corollary   4.2]{S-W}, we have
$$\begin{array}{ll} \ H^{i}_{I }(S) = 0 \ \text{\ \ \  if and only if\ \ \ } \ H^{n-i}_{\fm }(S/I) = 0.
\end{array}$$
The second assertion easily follows by the first one.

\item[(2)]
 Without loss of generality, we may assume that $R$ is a  local ring with the graded maximal ideal $\fm=(x_{1} ,..., x_{n})$.
Now the claim follows by part (1).

\end{enumerate}
\end{proof}

The next result provides as a consequence an upper bound for the $\depth S/I^l$ for each $l \geq 1$.  Moreover, the second part of the
next result has been proved by Lyubeznik in \cite{Lyu2}.

\begin{cor} \label{4.2} Let $R$, $S$ and $I$ be as above.
\begin{enumerate}
\item[(1)] $\depth S/I= \fgrade(I,S)$ holds.
\item[(2)] Assume that $\fa$ is a squarefree monomial ideal in $R$. Then, $\pd_R R/\fa =\cd(\fa,R)$.
\end{enumerate}
\end{cor}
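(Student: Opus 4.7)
The plan is to deduce both parts of the corollary as essentially immediate consequences of Proposition \ref{4.1}(1), combined (for part (2)) with the Auslander--Buchsbaum formula.

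For part (1), I would just read off both invariants as minimal non-vanishing indices and quote \ref{4.1}(1). By definition,
\[
\depth S/I = \min\{j : H^j_\fm(S/I) \neq 0\}, \qquad \fgrade(I,S) = \min\{j : \vpl_t H^j_\fm(S/I^t) \neq 0\}.
\]
After the substitution $i \mapsto n-i$, Proposition \ref{4.1}(1) asserts that for every $j$ the vanishing of $H^j_\fm(S/I)$ is equivalent to the vanishing of $\vpl_t H^j_\fm(S/I^t)$. The two index sets therefore coincide, and so do their minima. This gives $\depth S/I = \fgrade(I,S)$.

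For part (2), I would first pass from the polynomial ring $R$ to its localization $S$ at the graded maximal ideal $\fm = (x_1,\ldots,x_n)$. Because $\fa$ is a graded (squarefree monomial) ideal, this preserves the projective dimension, the depth at $\fm$, and the cohomological dimension (for a squarefree monomial ideal the non-vanishing local cohomology is concentrated at $\fm$). Since $S$ is a regular local ring of dimension $n$, the $S$-module $S/\fa S$ has finite projective dimension, so the Auslander--Buchsbaum formula gives $\pd_S S/\fa S = n - \depth S/\fa S$. Part (1) rewrites this as $n - \fgrade(\fa S, S)$, and one further application of the chain of equivalences in \ref{4.1}(1), now in the form $\fgrade(\fa S, S) = n - \cd(\fa S, S)$, yields the desired identity $\pd_R R/\fa = \cd(\fa,R)$.

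The only mildly technical point is the graded-to-local reduction in part (2); once inside the local ring $S$, the proof is just the composition of Auslander--Buchsbaum with Proposition \ref{4.1}(1) and part (1). Since all the cohomological content has already been established in \ref{4.1}, I expect the argument to amount to little more than bookkeeping of these three identities.
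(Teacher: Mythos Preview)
Your proposal is correct and matches the paper's proof essentially line for line: part (1) is read off from Proposition \ref{4.1}(1) as the equality of minimal non-vanishing indices, and part (2) is the graded-to-local reduction followed by Auslander--Buchsbaum combined with part (1) and the identity $\dim S-\fgrade(\fa,S)=\cd(\fa,S)$. The only cosmetic difference is that the paper cites Remark \ref{2.1}(2) for this last identity rather than rederiving it from \ref{4.1}(1), but either route works.
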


\begin{proof}
Assume that $\fgrade(I,S):=t$, then for all $i <t$ we have ${\vpl}_t H^{i}_{\fm}(S/I^t)=0$ if and only if  $H^{i}_{\fm }(S/I) = 0$
(cf. Proposition \ref{4.1}).
 Hence, $t \leq \depth S/I$. On the other hand assume that $\depth S/I:=s$. Again using Proposition \ref{4.1} we have $s \leq \fgrade(I,S)$,
  as desired.

In order to prove the second part, note that both $\pd_R R/\fa$ and  $\cd(\fa,R)$ are finite. Since
 $\pd R/\fa=\pd R_{\fm}/\fa R_{\fm}$ and $\cd(\fa,R)=\cd(\fa R_{\fm},R_{\fm})$, with $\fm=(x_1,...,x_n)$, then without loss of generality,
 we may assume that $R$ is a local ring with the homogeneous maximal ideal $\fm=(x_1,...,x_n)$. Now, by the Auslander-Buchsbaum formula and the first
 part one can get the claim. To this end note that
$$\begin{array}{ll} \ \pd_R R/\fa & = \depth R- \depth R/\fa
\\& = \dim R- \fgrade(\fa,R)
\\& =\cd(\fa,R).
\end{array}$$
\end{proof}

In the light of  Corollary \ref {4.2}, it is noteworthy to mention that for a squarefree monomial ideal $I$, we have
$$\begin{array}{ll} \ \depth S/I^l \leq  \fgrade(I,S)
\end{array}$$
for all positive integer $l$. Notice that $\depth S/I^l \leq \depth S/I$ for all positive integer $l$, see for example \cite{He-T-T}.

\begin{cor} \label{4.3} Let $R=k[x_{1} ,..., x_{n}]$ be a polynomial ring in $n$ variables $x_{1} ,..., x_{n}$ over a field $k$ and
$\fa$ be a squarefree monomial ideal of $R$. Then, the following are equivalent:
\begin{enumerate}
\item[(i)] $H^i_{\fa}(R)=0$ for all $i \neq \Ht \fa$, i.e. $\fa$ is cohomologically a complete intersection ideal.
\item[(ii)] $R/\fa$ is a Cohen-Macaulay ring.
\end{enumerate}
\end{cor}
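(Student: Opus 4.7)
The proof proposal is essentially a bookkeeping combination of two earlier results in this section, so no real obstacle remains; the work has already been done in Proposition \ref{4.1} and Corollary \ref{4.2}.

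For the direction $(i) \Rightarrow (ii)$, I would simply invoke Proposition \ref{4.1}(2). Hypothesis (i) says $H^i_\fa(R)=0$ for all $i\neq\Ht\fa$, so in particular $\cd(\fa,R)\leq\Ht\fa$, and combined with the universal lower bound $\Ht\fa\leq\cd(\fa,R)$ this gives the equality $\Ht\fa=\cd(\fa,R)$. Proposition \ref{4.1}(2) then yields that $R/\fa$ is Cohen-Macaulay. (As in the proof of Corollary \ref{4.2}(2), we may pass to the localization at $\fm=(x_1,\ldots,x_n)$ so that the hypotheses of Proposition \ref{4.1} are in force; cohomological dimension and the Cohen-Macaulay property of $R/\fa$ are preserved by this localization.)

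For the reverse direction $(ii) \Rightarrow (i)$, again localize at $\fm=(x_1,\ldots,x_n)$. Since $R/\fa$ is Cohen-Macaulay, the Auslander-Buchsbaum formula gives
$$\pd_R R/\fa \;=\; \depth R-\depth R/\fa \;=\; n-\dim R/\fa \;=\; \Ht\fa.$$
Corollary \ref{4.2}(2) identifies $\pd_R R/\fa$ with $\cd(\fa,R)$, hence $\cd(\fa,R)=\Ht\fa$. Because $R$ is regular we have $\grade\fa=\Ht\fa$, so $H^i_\fa(R)=0$ for all $i<\Ht\fa$ automatically. Together these two vanishings yield (i).

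In short, the plan is: route $(i)\Rightarrow(ii)$ through Proposition \ref{4.1}(2), and route $(ii)\Rightarrow(i)$ through Auslander-Buchsbaum plus Corollary \ref{4.2}(2), with a preliminary localization at $\fm$ to legitimize the use of local-ring statements. The only point requiring minor care is verifying that this localization preserves both $\Ht\fa$ and $\cd(\fa,R)$ and the Cohen-Macaulay property of $R/\fa$—but this was already addressed in the proof of Corollary \ref{4.2}(2) and transfers verbatim.
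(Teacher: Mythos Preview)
Your proof is correct. The paper's own argument is a touch more direct: after the same localization at $\fm=(x_1,\dots,x_n)$, it simply invokes Proposition~\ref{4.1}(1), whose equivalence $H^i_I(S)=0 \Leftrightarrow H^{n-i}_\fm(S/I)=0$ handles both directions at once, since $S/I$ is Cohen--Macaulay precisely when $H^j_\fm(S/I)=0$ for all $j\neq n-\Ht I$. Your route for $(ii)\Rightarrow(i)$ via Auslander--Buchsbaum and Corollary~\ref{4.2}(2) is a small detour---Corollary~\ref{4.2} is itself derived from Proposition~\ref{4.1}---but it is entirely valid and has the minor expository benefit of making the role of projective dimension explicit.
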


\begin{proof}
Since each of the modules in question is graded, so the issue of vanishing is
unchanged under localization at the homogeneous maximal ideal of $R$. Hence, the claim follows by Proposition \ref{4.1}.
\end{proof}

Let $\overline{x_1},...,\overline{x_n}$ be the image of the regular
sequence $x_1,...,x_n$ in $S$. Let $k, l \leq n$ be arbitrary
integers. For all
 $i=1,...,k$ set $I_i:=(\overline{x_{i_1}},...,\overline{x_{i_{r_i}}})$, where the elements $\overline{x_{i_j}}$, $1 \leq j \leq r_i \leq l$ are
 from the set $\{ \overline{x_1},...,\overline{x_n}\}$ and a squarefree monomial ideal $I$ be as follows
$$\begin{array}{ll} \ I=I_1 \cap I_2 \cap ... \cap I_k,
\end{array}$$
where the set of basis elements of the $I_i$ are disjoint.

\begin{prop} \label{4.4} Let $I$ be as above. Then $\cd(I,S)=\sum^{k}_{i=1}r_i -k+1$ and in particular, $\dg (I)=0$.
\end{prop}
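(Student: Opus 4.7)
The plan is to prove both equalities simultaneously by induction on $k$, leveraging the disjoint-variable-support hypothesis through a tensor-product decomposition. The base case $k=1$ is immediate: $I_1$ is a complete intersection of height $r_1$, so $\cd(I_1,S)=r_1$ and $S/I_1^n$ is Cohen-Macaulay of depth $\dim S-r_1$ for every $n\geq 1$, giving $\dg(I_1)=0$. For the inductive step I set $J:=I_1\cap\cdots\cap I_{k-1}$ and $K:=I_k$; the key structural fact, immediate from the disjointness of supports, is that $I=J\cap K=JK$, and more generally $I^n=J^n\cap K^n=J^nK^n$ for every $n$ (a monomial in the intersection is divisible by generators drawn from disjoint variable groups, hence by their product).

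To compute $\cd(I,S)$ I would apply the Mayer--Vietoris sequence
$$\cdots\to H^i_{J+K}(S)\to H^i_J(S)\oplus H^i_K(S)\to H^i_I(S)\to H^{i+1}_{J+K}(S)\to\cdots.$$
The inductive hypothesis yields $\cd(J,S)=\sum_{i<k}r_i-k+2$, and trivially $\cd(K,S)=r_k$. Since $J+K$ is itself a squarefree monomial ideal, Corollary \ref{4.2}(2) converts the computation of $\cd(J+K,S)$ into a depth computation, and the Künneth-type argument of the next paragraph gives $\depth S/(J+K)=\dim S-\sum r_i+k-2$, hence $\cd(J+K,S)=\sum r_i-k+2$. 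Tracing through the vanishing ranges at $i=\sum r_i-k+1$ then produces a nonzero image of $H^{\sum r_i-k+2}_{J+K}(S)$ inside $H^{\sum r_i-k+1}_I(S)$, while all higher terms vanish, pinning $\cd(I,S)=\sum r_i-k+1$.

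For $\dg(I)=0$ I would use the short exact sequence
$$0\to S/I^n\to S/J^n\oplus S/K^n\to S/(J^n+K^n)\to 0.$$
The central computation is $\depth S/(J^n+K^n)$. Writing $S=S_0\otimes_k\tilde A\otimes_k S_k$, where $S_0$ consists of the $m$ variables lying in none of the $F_i$, $\tilde A=k[\underline{x_1},\ldots,\underline{x_{k-1}}]$, and $S_k=k[\underline{x_k}]$, the disjoint-support hypothesis yields the isomorphism $S/(J^n+K^n)\cong S_0\otimes_k\tilde A/\tilde J^n\otimes_k S_k/\fn_k^n$ of $S$-modules. The Künneth formula for local cohomology over the field $k$, combined with the inductive hypothesis $\depth_{\tilde A}\tilde A/\tilde J^n=k-2$, then gives $\depth S/(J^n+K^n)=m+(k-2)+0=\dim S-\sum r_i+k-2$ independently of $n$. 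The standard depth lemma applied to the above short exact sequence delivers $\depth S/I^n\geq\dim S-\sum r_i+k-1$ for every $n$. Together with Proposition \ref{2.2} and Remark \ref{2.1}(2), which supply the reverse inequality $\min_n\depth S/I^n\leq\fgrade(I,S)=\dim S-\cd(I,S)=\dim S-\sum r_i+k-1$, equality follows and $\dg(I)=0$.

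The hardest part will be managing the Künneth step cleanly. The induction must carry the \emph{stronger} hypothesis that $\depth\tilde A/\tilde J^n$ equals the predicted constant $k-2$ for \emph{all} $n$, not merely for $n=1$; in effect, the two conclusions ``$\cd=\sum r_i-k+1$'' and ``$\dg=0$'' must be proved in tandem by a single induction on $k$, with each feeding the other at the next level.
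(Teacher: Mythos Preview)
Your argument is correct, but it takes a much longer route than the paper. The paper's proof is essentially two lines: it quotes \cite[Lemma~2]{Sch-V} directly for the depth formula $\depth S/I = \depth S - \sum_{i=1}^k r_i + k - 1$ (that lemma in fact controls all powers, which is also how it is invoked in Example~\ref{2.7}), and then applies Corollary~\ref{4.2}(1) together with Remark~\ref{2.1}(2) to convert this into $\cd(I,S)=\sum r_i - k + 1$ and to read off $\dg(I)=0$. In other words, the depth computation you carry out by induction, K\"unneth, and the depth lemma is precisely the content of Schenzel--Vogel's lemma, and the Mayer--Vietoris computation of $\cd$ is rendered unnecessary by the identity $\cd(I,S)=\dim S-\depth S/I$ that the paper has already established for squarefree monomial ideals. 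Your approach is self-contained and would work in a setting where Corollary~\ref{4.2} (i.e.\ the Singh--Walther input) is unavailable, but within the paper's framework it reproves machinery already in hand.
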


\begin{proof} By virtue of \cite[Lemma 2]{Sch-V}, $\depth S/I= \depth S- \sum^{k}_{i=1}r_i +k-1 $.
As $\depth S/I=\fgrade(I,S)=\dim S- \cd (I,S)$, (cf. \ref {4.2} and \ref {2.1} ) so the claim is clear.
\end{proof}

\proof[Acknowledgements]

I would like to thank Professor Markus Brodmann, Professor J\"urgen Herzog and Professor Moty Katzman for  useful discussions on
 $\min_n \depth R/\fa^n$ during preparation of this paper. My thanks are due to Professor Peter Schenzel for comments on Proposition
 \ref{4.1}. I am also grateful to the reviewer for suggesting
 several improvements of the manuscript.


\begin{thebibliography}{10}






\bibitem {As-D}
M. Asgharzadeh  and K. Divaani-Aazar,
 \emph{Finiteness properties of
formal local cohomology modules and Cohen-Macaulayness.} Comm. Alg.
{\bf 39}, no. 3, 1082-1103 (2011).

\bibitem {Ba1}
M. Barile, \emph{On the number of equations defining certain
varieties.} Manuscripta Math. {\bf 91}, 483-494 (1996).

\bibitem {Ba2}
M. Barile, \emph{A note on monomial ideals.} Arch. Math. {\bf 87},
516-521 (2006).

\bibitem {Ba3}
M. Barile,
\emph{On the arithmetical rank of certain monomial ideals.} Preprint.


\bibitem {Br}
M. Brodmann, \emph{The asymptotic nature of the analytic spread.}
Math. Proc. Cambridge Philos. Soc. {\bf 86}, 35-39 (1979).

\bibitem {Br-Sh}
M. Brodmann and R.Y. Sharp, \emph{Local cohomology: an algebraic
introduction with geometric applications.} Cambridge Univ. Press,
{\bf 60}, Cambridge, (1998).

\bibitem {Bru-Her}
W. Bruns and J. Herzog,
\emph{ Cohen-Macaulay rings.}
Cambridge Univ.
Press, (1993).

\bibitem {Bu}
L. Burch, \emph{ Codimension and analytic spread.} Cambridge Philos.
Soc. {\bf 72}, 369-373 (1972).

\bibitem {C}
R. C. Cowsik, \emph{ Symbolic powers and numbers of defining
equations.} Lecture notes in Pure and Applied Math. {\bf 91}, 13-14
(1984).

\bibitem {C-N}
R. C. Cowsik and M. V. Nori, \emph{ On the fibers of blowing up.} J.
Indian Math. Soc. (N.S.) {\bf 40}, 217-222 (1976).

\bibitem {E}
M. Eghbali, \emph{ On Artinianness of formal local cohomology,
colocalization and coassociated primes.} Math.
 Scand. {\bf 113}, 5-19 (2013).

\bibitem {Ei-Hu}
D. Eisenbud and C. Huneke, \emph{ Cohen-Macaulay Rees algebras and
their specialization.} J.  Algebra, {\bf 81}, 202-224 (1983).


\bibitem {H-Sch}
M. Hellus and P. Schenzel, \emph{ On cohomologically complete
intersections.} J. Algebra {\bf 320}, 3733-3748 (2008).

\bibitem {He-T-T}
J. Herzog, Y. Takayama and N. Terai, \emph{ On the radical of a
monomial ideal.} Arch. Math. {\bf 85}, 397-408 (2005).



\bibitem {K-T-Y}
K. Kimura, N. Terai and K.I. Yoshida, \emph{ Arithmetical rank of
squarefree monomial ideals of small arithmetic degree.} J. Alg.
Comb. {\bf 29}, 389-404 (2009).



\bibitem {Lyu}
G. Lyubeznik, \emph{ A survey of problems and results on the number
of defining equations.} Commutative Algebra, Math. Sci. Res. Inst.
Publ. {\bf 15}, Springer, 375-390 (1989).

\bibitem {Lyu2}
G. Lyubeznik, \emph{ On the local cohomology modules $H^i_{\fa}(R)$
for ideals $\fa$ generated by monomials in an $R$-sequence.} In:
Complete Intersections, Lectures given at the 1st 1983 Session of
the Centro Internazionale Matematico Estivo (C.I.M.E), Acireale,
Italy, June 13-21, (1983); Greco, S., Strano, R., Eds.; Springer:
Berlin Heidelberg, (1984).

\bibitem {P-S}
C. Peskine and L. Szpiro, \emph{ Dimension projective finie et
cohomologie locale.} Publ. Math. I.H.E.S., {\bf 42}, 323-395 (1973).

\bibitem {Sch}
P. Schenzel, \emph{ On formal local cohomology and connectedness.}
J. Algebra, {\bf 315}(2), 894-923 (2007).

\bibitem {Sch-V}
P. Schenzel and W. Vogel, \emph{ On set-theoretic intersections.} J.
Algebra {\bf 48}, 401-408 (1977).


\bibitem {S-W}
A. Singh and U. Walter, \emph{ Local cohomology and pure morphisms.}
Illinois J. of Math.  {\bf 51},  287-298 (2007).

\bibitem {Tr-Ik}
N. V. Trung and S. Ikeda, \emph{ When is the Rees algebra
Cohen-Macaulay?} Comm. in Alg.,  {\bf 17}(12), 2893-2922 (1989).

\bibitem {We}
C. A. Weibel, \emph{An introduction to homological algebra.}
Cambridge Univ. Press, (1994).


\bibitem {Ya}
Z. Yan, \emph{An $\acute{e}$tale analog of the Goresky-Macpherson
formula for subspace arrangements.} J. Pure Appl. Algebra {\bf 146},
305-318 (2000).








\end{thebibliography}
\end{document}